\newtheorem{theorem}{Theorem}[section]
\newtheorem{corollary}[theorem]{Corollary}
\newtheorem{example}[theorem]{Example}
\newtheorem{problem}[theorem]{Problem}
\theoremstyle{definition}
\newtheorem{definition}[theorem]{Definition}
\newtheorem{remark}[theorem]{Remark}
\numberwithin{equation}{section}
\newcommand{\e}{\varepsilon}
\newcommand{\w}{\omega}
\newcommand{\IR}{\mathbb{R}}
\newcommand{\II}{\mathbb{I}}
\newcommand{\IF}{\mathbb{F}}
\newcommand{\Sz}{\mathrm{Sz}}
\newcommand{\KK}{\mathcal{K}}
\newcommand{\supp}{\mathrm{supp}}
\newcommand{\cl}{\mathrm{cl}}
\newcommand{\diam}{\mathsf{diam}}
\newcommand{\BNP}{\mathsf{BNP}}
\newcommand{\card}{\mathsf{card}}
\newcommand{\SM}{{\setminus}}
\title[Banach spaces with the Gelfand--Phillips property]{%Banach spaces satisfying
Banach spaces with the (strong) Gelfand--Phillips property}
\author{Taras Banakh and Saak Gabriyelyan}
\address{Ivan Franko National University of Lviv (Ukraine) and Jan Kochanowski University in Kielce (Poland)}
\email{t.o.banakh@gmail.com}
\address{Department of Mathematics, Ben-Gurion University of the Negev, Beer-Sheva, P.O. 653, Israel}
\email{saak@math.bgu.ac.il}
\subjclass[2010]{Primary 46A03; Secondary 46E10, 46E15}
\keywords{Banach space, Gelfand--Phillips property, strong Gelfand--Phillips property}
\begin{document}

\begin{abstract}
Several new characterizations of the Gelfand--Phillips property are given. We define a strong version of the Gelfand--Phillips property and prove that a Banach space has this stronger property iff it embeds into $c_0$. For an infinite compact space $K$,  the Banach space $C(K)$ has the strong Gelfand--Phillips property iff $C(K)$ is isomorphic to $c_0$ iff $K$ is countable and has finite scattered height.
\end{abstract}

\maketitle

%%%%%%%%%%%%%%%%%%%%%%%%%%%%%%%%%%%%%%%%%%%%%%%
%%%%%%%%%%%%%%%%%%%%%%%%%%%%%%%%%%%%%%%%%%%%%%%
%%%%%%%%%%%%%%%%%%%%%%%%%%%%%%%%%%%%%%%%%%%%%%%

\section{Introduction}

%%%%%%%%%%%%%%%%%%%%%%%%%%%%%%%%%%%%%%%%%%%%%%%
%%%%%%%%%%%%%%%%%%%%%%%%%%%%%%%%%%%%%%%%%%%%%%%
%%%%%%%%%%%%%%%%%%%%%%%%%%%%%%%%%%%%%%%%%%%%%%%

All topological spaces are assumed to be Tychonoff and infinite, all Banach spaces are infinite-dimensional over the field $\mathbb{F}$ of real or complex numbers, and all operators between Banach spaces are linear and continuous. For a Banach space $E$, we denote by $B_E$ the closed unit ball of $E$, and the dual space of $E$ is denoted by $E'$.  For a bounded subset $B\subseteq E$ and a functional $\chi\in E'$,  we put $\|\chi\|_B:= \sup\{ |\chi(x)|:x\in B\}$.

A bounded subset $B$ of a Banach space $E$ is called {\em limited\/} if every weak$^\ast$ null sequence $(\chi_n)_{n\in\w}$ in $E'$ converges uniformly on $B$, that is
$
\lim_n \|\chi_n\|_B =0.
$
A Banach space $E$ is called {\em Gelfand--Phillips} if every limited set in $E$ is precompact. A subset $A$ of $E$ is {\em precompact} if its closure in $E$ is compact.

In \cite{Gelfand} Gelfand proved that every separable Banach space is Gelfand--Phillips. The condition of being separable is essential. Indeed, Phillips \cite{Phillips} showed that the space $\ell_\infty=C(\beta\w)$ is not Gelfand--Phillips, where $\beta\w$ is the Stone--\v{C}ech compactification of the discrete space $\w$ of nonnegative integers.

%The problem to characterize the Gelfand--Phillips Banach spaces arises naturally.
 Bourgain and Diestel  \cite{BourDies} defined an operator $T:L\to E$ between Banach spaces to be {\em limited} if $T(B_L)$ is a limited subset of $E$, or equivalently, if the adjoint operator $T^\ast$ is weak$^\ast$-norm sequentially continuous. In \cite{Drewnowski}, Drewnowski observed the following characterization playing a considerable  role in recognizing Gelfand--Phillips spaces.

\begin{theorem} \label{t:Drew-GP}
For a Banach space $E$ the following assertions are equivalent:
\begin{enumerate}
\item[{\rm (i)}] $E$ is Gelfand--Phillips;
\item[{\rm (ii)}] every limited weakly null sequence in $E$ is norm null;
\item[{\rm (iii)}] every limited operator with range in $E$ is compact.
\end{enumerate}
\end{theorem}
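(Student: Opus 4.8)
I would prove the cyclic chain of implications (i)~$\Rightarrow$~(ii)~$\Rightarrow$~(iii)~$\Rightarrow$~(i). The implications (i)~$\Rightarrow$~(ii) and (iii)~$\Rightarrow$~(i) are soft and can be disposed of directly; the implication (ii)~$\Rightarrow$~(iii) carries all the content, and it rests on the classical fact that every limited subset of a Banach space is weakly conditionally compact (equivalently, contains no sequence equivalent to the unit vector basis of $\ell_1$; see e.g.\ \cite{BourDies}).

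To see (i)~$\Rightarrow$~(ii), let $(x_n)$ be a weakly null sequence whose range $\{x_n:n\in\w\}$ is limited. By (i) this range is precompact, so its norm-closure $K$ is norm-compact; being norm-compact, $K$ is also weakly compact, hence weakly closed, hence it contains the weak limit $0$ of $(x_n)$. On a norm-compact set the weak and norm topologies coincide — the identity of $K$ is a continuous bijection from the compact norm topology onto the Hausdorff weak topology, hence a homeomorphism — so $x_n\to 0$ weakly forces $\|x_n\|\to 0$. For (iii)~$\Rightarrow$~(i), let $B\subseteq E$ be limited; rescaling, we may assume $B\subseteq B_E$. Define an operator $T:\ell_1(B)\to E$ by $(a_b)_{b\in B}\mapsto\sum_{b\in B}a_b\,b$, so that $\|T\|\le 1$, $Te_b=b$, and $\overline{T(B_{\ell_1(B)})}=\overline{\acx(B)}$. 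The set $\overline{\acx(B)}$ is limited, because every weak$^*$ null sequence $(\chi_n)$ in $E'$ satisfies $\|\chi_n\|_{\overline{\acx(B)}}=\|\chi_n\|_{\acx(B)}\le\|\chi_n\|_B\to 0$. Hence $T$ is a limited operator, so by (iii) it is compact, so $\overline{\acx(B)}$ is norm-compact, and $B\subseteq\overline{\acx(B)}$ is precompact.

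For (ii)~$\Rightarrow$~(iii), let $T:L\to E$ be a limited operator, so $S:=T(B_L)$ is a limited set; it suffices to show that $S$ is precompact, i.e.\ that every sequence in $S$ has a norm-Cauchy subsequence. Fix a sequence $(y_n)$ in $S$. Since the limited set $S$ is weakly conditionally compact, some subsequence $(y_{n_k})_k$ is weakly Cauchy. If $(y_{n_k})_k$ were not norm-Cauchy, I could choose $\e>0$ and indices $k_1<l_1<k_2<l_2<\cdots$ with $\|y_{n_{k_i}}-y_{n_{l_i}}\|\ge\e$ for all $i$; then $z_i:=y_{n_{k_i}}-y_{n_{l_i}}$ would be weakly null, as the difference of two subsequences of the weakly Cauchy sequence $(y_{n_k})_k$, while $\{z_i:i\in\w\}\subseteq S-S$, which is limited since $\|\chi\|_{S-S}\le 2\|\chi\|_S$ for every $\chi\in E'$. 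Thus $(z_i)$ would be a limited weakly null sequence, forcing $\|z_i\|\to 0$ by (ii) and contradicting $\|z_i\|\ge\e$. Hence $(y_{n_k})_k$ is norm-Cauchy, and by completeness of $E$ it converges; so $S$ is precompact and $T$ is compact.

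The one genuine obstacle is the input that limited subsets of a Banach space are weakly conditionally compact: this is the single step that is a real theorem rather than bookkeeping, and it is exactly what keeps the argument in (ii)~$\Rightarrow$~(iii) from being a routine diagonalization. (If instead one aims at (ii)~$\Rightarrow$~(i) directly, the same fact is needed: from a limited non-precompact set one extracts an $\e$-separated sequence, passes to a weakly Cauchy subsequence by that fact, and forms differences to contradict (ii).) The remaining points calling for a little care are purely routine: the coincidence of the weak and norm topologies on norm-compact sets, the stability of limitedness under closed absolutely convex hulls, and the factorization of a limited set through $\ell_1(B)$.
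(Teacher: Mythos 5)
The paper states this theorem without proof, attributing it to Drewnowski \cite{Drewnowski}, so there is no in-paper argument to compare against; judged on its own, your proof is correct and is essentially the standard one. Each implication checks out: (i)$\Rightarrow$(ii) via the coincidence of the weak and norm topologies on a norm-compact set, (iii)$\Rightarrow$(i) via the factorization $\ell_1(B)\to E$ together with the (correct) observation that the closed absolutely convex hull of a limited set is limited, and (ii)$\Rightarrow$(iii) via the Bourgain--Diestel theorem that limited sets are conditionally weakly compact, which you rightly isolate as the only genuinely non-trivial input and correctly attribute to \cite{BourDies}.
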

\noindent It immediately follows from (ii) that every Schur space (in particular, $\ell_1(\Gamma)$ for some infinite set $\Gamma$) is Gelfand--Phillips. %This characterization of Gelfand--Phillips spaces plays a crucial role in many arguments to show that the Banach space $E$ has the $(GP)$ property.

Gelfand--Phillips spaces were intensively studied by many authors, see for example \cite{Drewnowski,DrewEm,Leung,Schlumprecht-Ph,Schlumprecht-C,SinhaArora} and more recent articles \cite{CGP,GhLe,Jos,SaMo}. Another direction for studying the  Gelfand--Phillips property is to characterize Gelphand--Phillips spaces that belong to some important classes of Banach spaces.
 Since every Banach space is (isometrically) embedded in a $C(K)$-space, it is important to recognize Gelfand--Phillips spaces among Banach spaces of continuous functions in terms of the compact space $K$. Some sufficient conditions on compact spaces $K$ to have Gelfand--Phillips space $C(K)$  were obtained by Drewnowski \cite{Drewnowski}, Drewnowski and Emmanuele \cite{DrewEm}, and by Schlumprecht in \cite{Schlumprecht-Ph,Schlumprecht-C}.

In the first main result of the paper (Theorem  \ref{t:Banach-eJNP-1}) we obtain several new characterizations of Gelfand--Phillips spaces from which we deduce some sufficient conditions of being a Gelfand--Phillips space (Corollary \ref{c:Banach-GP}). Our approach involves the family $\BNP(E)$ of all {\em bounded non-precompact} subsets of a Banach space $E$, instead of {\em limited} sets in $E$. This approach  leads us to the following characterization of the Gelfand--Phillips property: {\em A Banach space $E$ is Gelfand--Phillips if and only if for every $B\in\BNP(E)$, there is a weak$^\ast$ null sequence $(\chi_n)_{n\in\w}$ in $E'$ such that $\|\chi_n\|_B \not\to 0$}. The importance of this reformulation is that it gives not only a new characterization of the Gelfand--Phillips property, but it also allows to introduce and study a {\em strong} version of that property, see  Definition \ref{def:Banach-strong-GP} below.

%Now we define the strong version of the Gelfand--Phillips property.
Analysing the aforementioned characterization of the Gelfand--Phillips property, one can naturally ask:
When does there exist a {\em fixed} weak$^\ast$ null sequence $(\chi_n)_{n\in\w}$ in the dual $E'$ such that $\|\chi_n\|_B \not\to 0$  for {\em every} $B\in \BNP(E)$? This question motivates to introduce the following strong version of the Gelfand--Phillips property.

\begin{definition} \label{def:Banach-strong-GP}
A Banach space $E$ is defiend to have the {\em strong Gelfand--Phillips property} if $E$ admits a  weak$^\ast$ null-sequence $(\chi_n)_{n\in\w}$ in $E'$ such that $\|\chi_n\|_B \not\to 0$ for every $B\in\BNP(E)$. In this case we will say that $E$ is {\em strongly Gelfand--Phillips}.\qed
\end{definition}

%Strong Gelfand--Phillips spaces are studied in Section \ref{sec:strong-GP}.
Strong Gelfand--Phillips spaces are studued in Section~\ref{sec:strong-GP}. It turns out that the class of such spaces is rather narrow: according to Theorem \ref{t:Banach-strong-GP}, a  Banach space $E$ is strongly Gelfand--Phillips if and only if it embeds into $c_0$; by Theorem \ref{t:Banach-C(K)-strong-GP}, for a compact space $K$ the Banach space $C(K)$ is strongly Gelfand--Phillips if and only if $C(K)$ is isomorphic to $c_0$ if and only if $K$ is a countable compact space of finite scattered height.

%%%%%%%%%%%%%%%%%%%%%%%%%%%%%%%%%%%%%%%%%
%%%%%%%%%%%%%%%%%%%%%%%%%%%%%%%%%%%%%%%%%
%%%%%%%%%%%%%%%%%%%%%%%%%%%%%%%%%%%%%%%%%
%%%%%%%%%%%%%%%%%%%%%%%%%%%%%%%%%%%%%%%%%
%%%%%%%%%%%%%%%%%%%%%%%%%%%%%%%%%%%%%%%%%

\section{A characterization of Gelfand--Phillips spaces} \label{sec:e-JN}

%%%%%%%%%%%%%%%%%%%%%%%%%%%%%%%%%%%%%%%%%
%%%%%%%%%%%%%%%%%%%%%%%%%%%%%%%%%%%%%%%%%
%%%%%%%%%%%%%%%%%%%%%%%%%%%%%%%%%%%%%%%%%
%%%%%%%%%%%%%%%%%%%%%%%%%%%%%%%%%%%%%%%%%
%%%%%%%%%%%%%%%%%%%%%%%%%%%%%%%%%%%%%%%%%

%We start with some necessary notations and notions.

%To characterize Banach spaces with the $(GP)$ property, we need the following topological notions.

A topological space $X$ is defined to be
\begin{itemize}
\item {\em sequentially compact} if every sequence in $X$ contains a convergent subsequence;
\item {\em selectively sequentially pseudocompact at} a subset $A\subseteq X$ if for any open sets $U_n\subseteq X$, $n\in\w$, intersecting the set $A$,  there exists a sequence $(x_n)_{n\in\w}\in\prod_{n\in\w}U_n$ containing a convergent subsequence;
\item {\em selectively sequentially pseudocompact} if $X$ is sequentially sequentially pseudocompact at $X$, see \cite{DAS1}.
\end{itemize}
It is clear that every selectively sequentially pseudocompact space $X$ is pseudocompact, but the converse is not true in general since $X$ must contain non-trivial convergent sequences. In particular, the Stone--\v{C}ech compactification $\beta D$ of an infinite discrete space $D$ is not selectively sequentially pseudocompact.
Compact selectively sequentially pseudocompact spaces form the class $\KK''$ introduced by Drewnowski and Emmanuele \cite{DrewEm}. % Such spaces were also studied by Dorantes-Aldama and Shakhmatov \cite{DAS2}.

A non-trivial class of  selectively sequentially pseudocompact spaces is the class of Valdivia compact spaces widely studied in Functional Analysis.
Let us recall that a compact space $K$ is {\em Valdivia compact} if $K$ is homeomorphic to a  subspace $X$ of a Tychonoff cube $[-1,1]^\kappa$ such that for the set $\Sigma :=\{x\in[-1,1]^\kappa:|\supp(x)|\leq\w\}$, the intersection $X\cap\Sigma$ is dense in $X$. Since the space $\Sigma$ is sequentially compact, every Valdivia compact space is selectively sequentially pseudocompact. It should be mentioned that Banach spaces whose dual unit ball is Valdivia compact in the weak$^\ast$ topology form an important and well-studied class of Banach spaces, see \cite{Kalenda}.

%{\color{red}Denote by $C^0_p(\w)$ the subspace of $\mathbb{F}^\w$ consisting of all $\mathbb{F}$-valued  functions on the discrete space $\w$ that tend to zero at infinity, where $\mathbb{F}$ is the field $\IR$ or $\mathbb{C}$.\footnote{\color{red}I would remove this paragraph at all.}}

Let $E$ be a Banach space. For simplicity of notations, the space $E$ endowed with the weak topology is denoted by $E_w$, and $E'_{w^\ast}$ denotes the dual space $E'$ with the weak$^\ast$  topology.
%We shall say that a subset $D$ of  $E$ is {\em uniformly separated} if there exists $\delta>0$ such that $\|d-d'\|>\delta$ for all distinct $d,d'\in D$. {\color{blue}It is easy to see that a subset $D\subset E$ is not precompact if and only if it is not totally bounded if and only if it contains an infinite uniformly separated subset.}
A bounded subset $S$ of $E'$ is called {\em norming} if the formula $\|x\|_S=\sup_{\chi\in S}|\chi(x)|$ determines an equivalent norm on $E$, see \cite[p.160]{fabian-10}. We generalize this classical notion as follows. Let $B$ be a subset of $E$. A bounded subset $S$ of $E'$ is defined to be {\em $B$-norming} if there exist a positive constant $\lambda$ such that
\[
\tfrac1\lambda\|x\|_S \leq \|x\|\leq \lambda\|x\|_S \quad \mbox{ for every }\; x\in B.
\]
In some papers (e.g.  \cite{Schlumprecht-C}) $E$-norming sets are called  $E$-norming up to a constant.

In the following theorem, which is the main result of this section,  we give several new characterizations of Gelfand--Phillips spaces.

%Although clauses (iii) and (vi) seem to be sufficiently technical, they will be used in the proof of Corollary \ref{c:Banach-GP}.

\begin{theorem} \label{t:Banach-eJNP-1}
For a Banach space $E$ the following assertions are equivalent:
\begin{enumerate}
\item[{\rm (i)}] $E$ is Gelfand--Phillips.
\item[{\rm (ii)}] For every $X\in\BNP(E)$ there exists an operator $T:E\to c_0$ such that $T(X)$ is not  precompact in the Banach space $c_0$.
\item[{\rm (iii)}] For every $X\in\BNP(E)$, there is a weak$^\ast$ null-sequence $(\chi_n)_{n\in\w}$ in $E'$ such that $\|\chi_n\|_X \not\to 0$.
\item[{\rm (iv)}] For every $B\in \BNP(E)$ there exist an infinite  subset $X \subseteq B$ such that the space $E'_{w^\ast}$ is selectively sequentially pseudocompact at some $(X-X)$-norming set $S\subseteq E'$.
\end{enumerate}
\end{theorem}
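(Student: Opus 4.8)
The plan is to prove the cycle of implications (i)$\Rightarrow$(iii)$\Rightarrow$(ii)$\Rightarrow$(i), and then close a separate loop (iii)$\Leftrightarrow$(iv), since (iii) and (iv) are both "witness"-style statements about weak$^\ast$ null sequences in $E'$.

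\emph{(i)$\Rightarrow$(iii).} Let $X\in\BNP(E)$. Since $E$ is Gelfand--Phillips, $X$ cannot be limited (a limited set would be precompact). By the definition of "limited," the failure of $X$ to be limited means precisely that there is a weak$^\ast$ null sequence $(\chi_n)_{n\in\w}$ in $E'$ with $\|\chi_n\|_X\not\to 0$. So this implication is essentially just unwinding definitions.

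\emph{(iii)$\Rightarrow$(ii).} Given $X\in\BNP(E)$, take the weak$^\ast$ null sequence $(\chi_n)$ from (iii); by scaling we may assume $(\chi_n)$ is bounded, say $\|\chi_n\|\le 1$. Define $T:E\to c_0$ by $T(x)=(\chi_n(x))_{n\in\w}$; this lands in $c_0$ precisely because $(\chi_n)$ is weak$^\ast$ null, and it is bounded. Since $\|\chi_n\|_X\not\to 0$, there is $\e>0$ and infinitely many $n$ with $\sup_{x\in X}|\chi_n(x)|>\e$; for each such $n$ pick $x_n\in X$ with $|\chi_n(x_n)|>\e$. Then $\|T(x_n)\|_{c_0}\ge|\chi_n(x_n)|>\e$, and moreover for $m\ne n$ among this subsequence one still needs a separation estimate in $c_0$. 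The cleanest route: observe that in $c_0$, a bounded set is precompact iff it is "small at infinity uniformly," and the sequence $(T(x_n))$ has the property that its $n$-th coordinate stays bounded below by $\e$ while $n\to\infty$, which prevents $T(X)$ from being precompact. This is a standard compactness-in-$c_0$ argument; a little care is needed because precompactness in $c_0$ is not literally "tails go to zero uniformly along a fixed sequence," so I would phrase it via: if $T(X)$ were precompact, then $(T(x_n))$ would have a norm-convergent subsequence with limit $y\in c_0$, whence all but finitely many coordinates of $y$ are $<\e/2$, contradicting that for large $n$ the $n$-th coordinate of $T(x_n)$ exceeds $\e$.

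\emph{(ii)$\Rightarrow$(i).} Suppose $B\subseteq E$ is limited; we must show $B$ is precompact. If not, $B\in\BNP(E)$, so by (ii) there is $T:E\to c_0$ with $T(B)$ not precompact in $c_0$. But $T$ is an operator, hence weak$^\ast$-weak$^\ast$ continuous on the duals, and the image of a limited set under an operator is limited (this is immediate: if $(\psi_k)$ is weak$^\ast$ null in $c_0'=\ell_1$, then $(T^\ast\psi_k)$ is weak$^\ast$ null in $E'$, and $\|\psi_k\|_{T(B)}=\|T^\ast\psi_k\|_B\to 0$). So $T(B)$ is a limited subset of $c_0$. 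Since $c_0$ is separable, it is Gelfand--Phillips by Gelfand's theorem, so $T(B)$ is precompact — a contradiction. Hence $B$ is precompact.

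\emph{(iii)$\Leftrightarrow$(iv).} This is the step I expect to be the main obstacle, because it trades a sequential statement (a single weak$^\ast$ null sequence witnessing non-smallness) for a topological one (selective sequential pseudocompactness of $E'_{w^\ast}$ at an $(X-X)$-norming set). For (iv)$\Rightarrow$(iii): given $B\in\BNP(E)$, take the infinite $X\subseteq B$ and the $(X-X)$-norming set $S$ at which $E'_{w^\ast}$ is selectively sequentially pseudocompact. Since $X$ is not precompact one can extract from $X-X$ a sequence bounded away from $0$ in norm; using that $S$ is $(X-X)$-norming, each such difference is detected up to a constant by some functional in $S$. Then one chooses open neighborhoods $U_n$ in $E'_{w^\ast}$ of suitable points of $S$ on which the evaluation at the chosen differences exceeds a fixed $\delta>0$; selective sequential pseudocompactness yields $\chi_n\in U_n$ with a weak$^\ast$-convergent subsequence, and after subtracting the limit one gets the desired weak$^\ast$ null sequence with $\|\chi_n\|_X\not\to 0$. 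For (iii)$\Rightarrow$(iv): given $B\in\BNP(E)$, the sequence $(\chi_n)$ from (iii) together with a suitable norming set built from $\{\chi_n\}$ and a Hahn--Banach functional norming a fixed non-precompact "grid" in $B$ should serve; one takes $X\subseteq B$ to be an infinite subset extracted so that the differences are separated, and checks that the relatively weak$^\ast$-compact set $S=\overline{\{\chi_n:n\in\w\}}^{\,w^\ast}$ (after renorming to make it $(X-X)$-norming, possibly enlarging it with finitely or countably many extra functionals) is selectively sequentially pseudocompact because every relevant family of open sets meets the sequentially compact set $\{\chi_n\}\cup\{0\}$. The technical heart is arranging that $S$ is simultaneously $(X-X)$-norming and sequentially "rich enough" — this is where the choice of the infinite subset $X$ of $B$, rather than all of $B$, is used crucially.
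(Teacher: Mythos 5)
Your cycle (i)$\Rightarrow$(iii)$\Rightarrow$(ii)$\Rightarrow$(i) is correct and is a legitimate reorganization of the paper's chain (i)$\Rightarrow$(ii)$\Rightarrow$(iii): going through $c_0$ and closing the loop via ``operators map limited sets to limited sets'' plus the classical (and non-circular) fact that the separable space $c_0$ is Gelfand--Phillips is clean. The trouble is the equivalence with (iv), which is where the theorem's content lies, and there your argument has a genuine gap in the direction (iv)$\Rightarrow$(iii). A first, smaller point: (iv) only hands you \emph{some} infinite $X\subseteq B$, and an infinite subset of a non-precompact set can perfectly well be precompact, so you cannot assert ``since $X$ is not precompact''; one must first replace $B$ by a uniformly $\e$-separated subset $\{z_n\}_{n\in\w}$ and apply (iv) to that set, so that any infinite $X$ it returns is automatically $\e$-separated. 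The serious point is that your mechanism ``selective sequential pseudocompactness yields $\chi_n\in U_n$ with a weak$^\ast$-convergent subsequence, and after subtracting the limit one gets the desired weak$^\ast$ null sequence with $\|\chi_n\|_X\not\to0$'' does not work. If $g_{k_j}\to g_\infty$ weak$^\ast$ with $|g_{k_j}(x_{m_{k_j}}-x_{n_{k_j}})|>c\e$, then for $\chi_j:=g_{k_j}-g_\infty$ you only get $|\chi_j(x_{m_{k_j}}-x_{n_{k_j}})|\ge c\e-|g_\infty(x_{m_{k_j}}-x_{n_{k_j}})|$, and the subtracted term is uncontrolled because the test vectors move with $j$ (weak$^\ast$ convergence controls $g_{k_j}(z)-g_\infty(z)$ only for \emph{fixed} $z$); nothing prevents $|g_\infty(x_m-x_n)|>c\e$ for all relevant pairs. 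This is exactly the difficulty the paper's proof is built to overcome: it picks functionals $f_{n,m}$ for \emph{all} pairs $n<m$, runs a nested diagonalization in the sequentially compact metrizable cube $K^X$ to produce a limit function $f_\infty$ whose oscillation on $\{x_n\}_{n\in\Omega}$ is $<\tfrac14 c\e$, arranges $g_\infty{\restriction}_X=f_\infty$, and then forms differences $\mu_k=g_k-g_{j_k}$ of two members of the selected convergent sequence (not the difference with the limit), using the oscillation bound to control the subtracted term. Without something playing the role of this oscillation control, the subtraction gives no lower bound on $\|\chi_j\|_X$.

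The converse direction (iii)$\Rightarrow$(iv) is also only gestured at, and your suggestion to enlarge $S$ by ``countably many extra functionals'' to force the norming property is risky, since an arbitrary countable enlargement can destroy selective sequential pseudocompactness at $S$. The needed construction is concrete and avoids any enlargement: rescale so that $\inf_n\|\chi_n\|_B>2b$ with $b=\sup\{\|x-y\|:x,y\in B\}$, then inductively extract $x_k\in B$ and indices $n_k$ with $|\chi_{n_k}(x_k)|>2b$ and $|\chi_{n_k}(x_i)|<b$ for $i<k$; the set $S=\{\chi_{n_k}\}_{k\in\w}$ then satisfies $\|x_k-x_i\|\le b<\|x_k-x_i\|_S$, and being a weak$^\ast$ null sequence it is contained in the weak$^\ast$ sequentially compact set $S\cup\{0\}$, which yields selective sequential pseudocompactness at $S$. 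In short: your treatment of (i), (ii), (iii) is sound and takes a slightly different, equally valid route; but both implications involving (iv) remain unproved as written, and the (iv)$\Rightarrow$(iii) sketch rests on a step that fails.
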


\begin{proof} (i)$\Rightarrow$(ii) Assume that $E$ is Gelfand--Phillips and take any set $X\in\BNP(E)$. Since $E$ is Gelfand--Phillips, the non-precompact set $X$ is not limited, which means that $\|\chi_n\|_X\not\to 0$ for some weak$^\ast$ null-sequence $(\chi_n)_{n\in\w}$ in $E'$. By the Banach--Steinhaus Uniform Boundedness Principle, the weak$^\ast$ bounded set $\{\chi_n\}_{n\in\w}$ is norm-bounded in $E'$, which implies that the linear map
\[
T:E\to c_0,\quad T:x\mapsto\big(\chi_n(x)\big)_{n\in\w},
\]
is bounded. It remains to show that $T(X)$ is not precompact in $c_0$. Assuming that $T(X)$ is precompact, for every $\e>0$ we can find a finite set $F\subseteq X$ such that $T(F)$ is an $\e$-net in $T(X)$ (which means that for every $x\in T(X)$ there exists $y\in T(F)$ such that $\|x-y\|<\e$). Since $(\chi_n)_{n\in\w}$ is weak$^\ast$ null
and $F$ is finite, there exists $m\in\w$ such that $\sup_{n\ge m}\max_{y\in F}|\chi_n(y)|<\e$. For every $x\in X$ choose $y\in F$ with $\|T(x)-T(y)\|<\e$ and conclude that for every $n\ge m$ we have
\[
|\chi_n(x)|\le|\chi_n(y)|+|\chi_n(y)-\chi_n(x)|<\e+\|T(y)-T(x)\|<2\e,
\]
which implies that $\|\chi_n\|_X\to 0$ and contradicts the choice of the sequence $(\chi_n)_{n\in\w}$.
\smallskip

(ii)$\Rightarrow$(iii) Given any set $X\in\BNP(E)$, apply (ii) and find an operator $T:E\to c_0$ such that $T(X)$ is not precompact in $c_0$. For every $n\in\w$, let $\chi_n=e_n'\circ T$, where $e'_n$ is the $n$th coordinate functional in $c_0$. It is clear that the sequence $(\chi_n)_{n\in\w}$ is weak$^\ast$ null in $E'$. It remains to prove that $\|\chi_n\|_X\not\to 0$. To derive a contradiction, assume that $\|\chi_n\|_X\to 0$. Then for every $\e>0$ there exists $m\in\w$ such that $\sup_{n\ge m}\|\chi_n\|_X<\e$. Since $X$ and $T$ are bounded, the number $b=\sup\{\|T(x)\|:x\in X\}$ is finite. Since the bounded finite-dimensional set
\[
K=\big\{y\in c_0:\|y\|\leq b,\;\mbox{ and } \; e'_n(y)=0 \mbox{ for all }  n\geq m\}
\]
is compact in $c_0$, there exists a finite set $F\subseteq K$ such that for every $x\in K$ there exists $y\in F$ such that $\|x-y\|<\e$. Given any $x\in X$, let $y\in K$ be a unique element such that $e'_n(y)=\chi_n(x)$ for  all $n<m$. The choice of the number $m$ ensures that $\|y-T(x)\|=\sup_{n\ge m}|\chi_n(x)|<\e$. Since $y\in K$, there exists $z\in F$ such that $\|y-z\|<\e$. Then $\|x-z\|<2\e$ which means that $F$ is a $2\e$-net for $T(X)$ and hence the set $T(X)$ is precompact in $c_0$, which contradicts the choice of the operator $T$.
\smallskip

(iii)$\Rightarrow$(iv) Given any set $B\in\BNP(E)$,
we apply (iii) to find a weak$^\ast$ null sequence $(\chi_n)_{n\in\w}$ in $E'$ such that $\inf_{n\in\w}\|\chi_n\|_B>0$. Multiplying the elements of the sequence $(\chi_n)_{n\in\w}$ by a suitable constant, we can assume that $\inf_{n\in\w}\|\chi_n\|_B>2b$ where $b:=\sup\{\|x-y\|:x,y\in B\}>0$.  Taking into account that $(\chi_n)_{n\in\w}$ is weak$^\ast$ null, construct inductively a sequence of points $(x_k)_{k\in\w}$ in $B$ and an increasing sequence of numbers $(n_k)_{k\in\w}$ such that for every $k\in\w$ the following two conditions are satisfied:
\begin{itemize}
\item $|\chi_{n_k}(x_k)|>2b$;
\item $|\chi_{n_k}(x_i)|<b$ for every $i<k$.
\end{itemize}
The triangle inequality ensures that $|\chi_{n_k}(x_k-x_i)|>b$ for every $i<k$.

Let $X=\{x_k:k\in\w\}$. Since the sequence $(\chi_{n_k})_{k\in\w}$ is weak$^*$ null, the space $E'_{w^\ast}$ is  selectively sequentially pseudocompact at the set $S=\{\chi_{n_k}:k\in\w\}$. We claim that $S$ is $(X-X)$-norming. By the Banach--Steinhaus Uniform Boundedness Principle, the weak$^\ast$ null sequence $(\chi_n)_{n\in\w}$ is norm bounded in $E'$ and so is the set $S\subseteq E'$. Then the positive constant $C=\sup\{\|\chi\|:\chi\in S\}$ is well-defined and, for every $z\in E$, we have $\|z\|_S=\sup_{\chi\in S}|\chi(z)|\leq C\cdot\|z\|$. If $z\in X-X$, then $z=x_k-x_i$ for some $k,i\in\w$. If $k=i$, then $z=0$ and $\|z\|_S=0=\|x\|$. If $k\ne i$, then
\[
\|z\|_S\ge\max\{|\chi_{n_i}(x_k-x_i)|,\;|\chi_{n_k}(x_k-x_i)|\}>b\ge \|x_k-x_i\|=\|z\|.
\]
Therefore,
\[
\|z\|\le\|z\|_S\le C\|z\|
\]
for every $z\in X-X$, which means that the set $S$ is $(X-X)$-norming.
\smallskip

(iv)$\Rightarrow$(i) Assuming that $E$ is not Gelfand--Phillips, we can find a limited set $B\in\BNP(E)$. Since $B$ is not precompact, there exists $\e>0$ such that for every finite subset $F\subseteq B$ there exists $x\in B$ such that $\|x-y\|>\e$ for all $y\in F$. Using this property of $\e$, we can inductively construct a sequence $\{z_n\}_{n\in\w}\subseteq B$ such that $\|z_n-z_i\|>\e$ for every $i<n$. It is clear that the set $\{z_n:n\in\w\}$ is not precompact. By (iv),  there exist an infinite  subset $X \subseteq \{z_n:n\in\w\}$ such that the space $E'_{w^\ast}$ is selectively sequentially pseudocompact at some $(X-X)$-norming set $S\subseteq E'_{w^\ast}$. Then there exists a positive real constant $c$ such that $\|z\|_S\ge c\,\|z\|$ for every $z\in X-X$. Consequently, for any distinct elements $x,y\in X$, we have
\[
\|x-y\|_S=\sup_{f\in S}|f(x-y)|\ge c\,\|x-y\|>c\e.
\]
Write the set $X$ as $\{x_n\}_{n\in\w}$ for pairwise distinct points $x_n$, and for every $n<m$, choose a linear functional $f_{n,m}\in S$ such that $|f_{n,m}(x_m-x_n)|>c\e$.
%Replacing $S$ by the set $\frac1{c\e}S$ we can assume that $\|x-y\|_S>1$ for any distinct points $x,y\in X$.

The selective sequential pseudocompactness of $E'_{w^\ast}$ at $S$ implies that the set $S$ in bounded in $E'_{w^\ast}$ and hence bounded in $E$ (by the Banach--Steinhaus Uniform Boundedness Principle). Therefore the set $\{f(x):f\in S,\;x\in X\}$ has compact closure $K$ in the field $\mathbb F$ (of real or complex numbers).

Using the sequential compactness of the compact metrizable space $K^X$, we can construct a decreasing sequence $\{\Omega_n\}_{n\in\w}$ of infinite sets in $\w$ such that for every $n\in\w$, the sequence $\{f_{n,m}{\restriction}_X\}_{m\in\Omega_n}$ converges to some function $f_n\in K^X$.

Choose an infinite set $\Omega\subseteq\w$ such that $\Omega\setminus\Omega_n$ is finite for every $n\in\w$. Since the compact metrizable space $K^X$ is sequentially compact, we can replace $\Omega$ by a smaller infinite set and additionally assume that the sequence $\{f_n\}_{n\in\Omega}$ converges to some element $f_\infty\in K^X$. Since the set $\{f_\infty(x_n)\}_{n\in\Omega}\subseteq K$ admits a finite cover by sets of diameter $<\tfrac{1}{4}c\e$, we can replace $\Omega$ by a suitable infinite subset and additionally assume that the set $\{f_\infty(x_n)\}_{n\in\Omega}$ has diameter $<\tfrac{1}{4}c\e$.

It follows that  the function $f_\infty\in K^X$ belongs to the closure of the set $\{f_{n,m}{\restriction}_X:n,m\in \Omega,\; n<m\}$.
Since the space $K^X$ is first-countable, we can choose a sequence
\[
\{(n_i,m_i)\}_{i\in\w}\subseteq\{(n,m)\in\Omega\times\Omega:n<m\}
\]
such that the sequence $\{f_{n_i,m_i}{\restriction}_X\}_{i\in\w}$ converges to $f_\infty$. Since $\IF^X$ is metrizable, for every $i\in\w$ the element $f_{n_i,m_i}{\restriction}_X$ of $K^X\subseteq\IF^X$ has an open neighborhood $V_i\subseteq \IF^X$  such that the sequence $\{V_i\}_{i\in\w}$ converges to $f_\infty$ in the sense that every neighborhood of $f_\infty$ in $\IF^X$ contains all but finitely many sets $V_i$. Since $|f_{n_i,m_i}(x_{n_i}-x_{m_i})|>c\e$, we can replace each set $V_i$ by a smaller neighborhood of $f_{n_i,m_i}{\restriction}_X$ and additionally assume that $|g(x_{n_i})-g(x_{m_i})|>c\e$ for every $g\in V_i$. For every $i\in\w$, consider the open neighborhood $W_i:=\{f\in E'_{w^\ast}:f{\restriction}_X\in V_i\}$ of the functional $f_{n_i,m_i}$ in the space $E'_{w^\ast}$.

Since $E'_{w^\ast}$ is selectively sequentially pseudocompact  at $S$, there exists a convergent sequence $(g_k)_{k\in\w}\subseteq E'_{w^\ast}$ and an increasing number sequence $\{i(k)\}_{k\in\w}$ such that $g_k\in W_{i(k)}$ for every $k\in\w$. Let $g_\infty\in E'_{w^\ast}$ be the limit of the sequence $(g_k)_{k\in\w}$. The continuity of the restriction operator $E'_{w^\ast}\to\IF^X$, $f\mapsto f{\restriction}_X$, and the choice of the open sets $V_i$, $i\in\w$, guarantee that $g_\infty{\restriction}_X=f_\infty$. Consequently, the sequence $(g_k{\restriction}_X)_{k\in\w}$ converges to $g_\infty{\restriction}_X =f_\infty$ in $\IF^X$.

Then for every $k\in \w$, we can find a number $j_k>k$ such that
\[
\max\{|f_\infty(x_{n_{i(k)}})-g_{j_k}(x_{n_{i(k)}})|, |f_\infty(x_{m_{i(k)}})-g_{j_k}(x_{m_{i(k)}})|\}<\tfrac{1}{8}c\e.
\]

For every $k\in\w$, consider the functional $\mu_k:=g_k-g_{j_k}\in E'$ and observe that the sequence $\{\mu_k\}_{k\in\w}$ converges to zero in $E'_{w^\ast}$. On the other hand,  for every $k\in \w$, the choice of the sequence $(j_k)_{k\in\w}$,
the inequality $\diam \{f_\infty(x_n)\}_{n\in\Omega}<\frac14c\e$, and the inclusion $g_k{\restriction}_X\in V_{i(k)}$  imply
\[
\begin{aligned}
|g_{j_k}(x_{n_{i(k)}}) & -g_{j_k}(x_{m_{i(k)}})|\\
& =|g_{j_k}(x_{n_{i(k)}})-f_\infty(x_{n_{i(k)}})+f_\infty(x_{m_{i(k)}})-g_{j_k}(x_{m_{i(k)}})+f_\infty(x_{n_{i(k)}})-f_\infty(x_{m_{i(k)}})|\\
& \leq|g_{j_k}(x_{n_{i(k)}})-f_\infty(x_{n_{i(k)}})|+|f_\infty(x_{m_{i(k)}})-g_{j_k}(x_{m_{i(k)}})|+|f_\infty(x_{n_{i(k)}})-f_\infty(x_{m_{i(k)}})|\\
&\leq \tfrac{1}{8}c\e+\tfrac{1}{8}c\e+\tfrac{1}{4}c\e=\tfrac{1}{2}c\e
\end{aligned}
\]
and
\[
\begin{aligned}
|\mu_k(x_{n_{i(k)}}) -\mu_k (x_{m_{i(k)}})|& =|g_{k}(x_{n_{i(k)}})-g_{j_k}(x_{n_{i(k)}})-g_k(x_{m_{i(k)}})+g_{j_k}(x_{m_{i(k)}})|\\
& \geq |g_{k}(x_{n_{i(k)}})-g_k(x_{m_{i(k)}})|-|g_{j_k}(x_{n_{i(k)}})-g_{j_k}(x_{m_{i(k)}})|>c\e-\tfrac{1}{2}c\e=\tfrac{1}{2}c\e.
\end{aligned}
\]
Then, for every $k\in \w$,
\[
\sup_{x\in B}|\mu_k(x)|\geq \max\{|\mu_k(x_{n_{i(k)}})|,|\mu_k(x_{m_{i(k)}})|\}\geq\tfrac{1}{4}c\e,
\]
witnessing that the set $B$ is not limited. This contradiction shows that the Banach space $E$ is Gelfand--Phillips.
\end{proof}

In Corollary \ref{c:Banach-GP} below we apply Theorem \ref{t:Banach-eJNP-1} to obtain some new and several well-known sufficient conditions on Banach spaces to be a Gelfand--Phillips space. For this we should recall some additional definitions.

Following Castillo, Gonz\'{a}lez and Papini \cite{CGP}, we define a Banach space $E$ to be {\em separably weak$^\ast$-extensible} if  any weak$^\ast$ null sequence in the dual of any separable subspace of $E$ admits a subsequence which can be extended to a weak$^\ast$ null sequence in $E'$. This class coincides with the following class of Banach spaces introduced by Correa and Tausk \cite{CT13,CT14}: A Banach space $E$ has the {\em separable $c_0$-exension property} if every operator $T:X\to c_0$ defined on a separable Banach subspace $X\subseteq E$ can be extended to an operator $\bar T:E\to c_0$. By \cite{CT13,CT14}, the class of Banach spaces with the separable $c_0$-extension property includes all weakly compactly generated Banach spaces and  all Banach spaces with the separable complementation property (= every separable subspace is contained in a separable complemented subspace).

For a compact space $K$, let
\[
P(K):=\{\mu\in C(K)':\|\mu\|=\mu(\mathbf{1}_K)=1\}
\]
be the space of probability measures on $K$, endowed with the weak$^\ast$ topology, inherited from $C(K)'_{w^\ast}$. It is well-known that the space $P(K)$ is compact. Identifying each $x\in K$ with the Dirac measure $\delta_x:C(K)\to\IF$, $\delta_x:f\mapsto f(x)$, we identify $K$ with a closed subspace of $P(K)\subseteq C(K)'_{w^\ast}$. Observe that the set  $\{\delta_x:x\in K\}$ is $C(K)$-norming.
% By the Riesz representation Theorem \cite[7.6.1]{Jar}, the dual space $C(K)'$ of the Banach space $C(X)$ is isometrically isomorphic to the Banach space of all Radon signed-measures on $X$ and, moreover, a continuous functional on $C(X)$ is positive if and only if it can be naturally identified with a probability Radon measure on $X$. Let

%be the space of probability Radon measures on $X$. In what follows we shall always consider the space $P(X)$ endowed with the weak$^\ast$ topology induced from $C(X)'_{w^\ast}$. Then $P(X)$ is a compact Hausdorff space.
%We denote by $P_\w(X)$  the convex hull  of the set $\{\delta_x:x\in X\}\subseteq P(X)$ of Dirac measures on $X$. The linear span of  $P_\w(X)$ is usually denoted by $L(X)$. We denote by $L_p(X):=L(X)_{w^\ast}$ the subspace $L(X)$ endowed with the weak$^\ast$ topology induced from $C(X)'_{w^\ast}$.
%It is clear that $L_p(X)$ is a norming subspace of $C(X)'$.
%The study of compact spaces $P(X)$ of probability Radon measures on infinite compact  spaces $X$ and their subsets $P_\w(X)$ is an important {\color{blue}topic} in Measure Theory and  Functional Analysis, see for example \cite{Bog}.

\begin{corollary} \label{c:Banach-GP}
A Banach space $E$ is Gelfand--Phillips if  one of the following conditions holds:
\begin{enumerate}
\item[{\rm (i)}] the closed unit ball  $B_{E'}$ endowed with the weak$^\ast$ topology is selectively sequentially pseudocompact;%$($in particular, Valdivia$)$ compact;
\item[{\rm (ii)}]  {\rm(\cite{Gelfand})} $E$ is separable;
\item[{\rm (iii)}] {\rm(\cite[Prop.~2]{CGP})} $E$  has the separable $c_0$-extension property \textup{(}$\Leftrightarrow E$ is  separably weak$^\ast$-extensible$)$;
\item[{\rm (iv)}]  {\rm(cf. \cite[Th.~2.2]{Drewnowski} and \cite[Prop.~2]{Schlumprecht-C})} the space $E'_{w^\ast}$ is selectively sequentially pseudocompact at some $E$-norming set $S\subseteq E'$;
\item[{\rm(v)}]  {\rm(\cite[Th.~4.1]{DrewEm})} $E=C(K)$ for some  compact selectively sequentially pseudocompact space $K$;
\item[{\rm (vi)}] $E=C(K)$ for some compact space $K$ such that $P(K)$ is  selectively sequentially pseudocompact at some set $A\subseteq P(K)$ containing $K$.
\end{enumerate}
\end{corollary}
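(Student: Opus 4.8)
The plan is to deduce all six items of the corollary from Theorem~\ref{t:Banach-eJNP-1}. The workhorse is the following elementary \emph{transfer principle}: if $Y$ is a topological subspace of a space $Z$ and $Y$ is selectively sequentially pseudocompact at a subset $A\subseteq Y$, then $Z$ is selectively sequentially pseudocompact at $A$. Indeed, given open sets $U_n\subseteq Z$ meeting $A$, the sets $U_n\cap Y$ are open in $Y$ and still meet $A$ (since $A\subseteq Y$); a sequence $x_n\in U_n\cap Y$ with a subsequence converging in $Y$ has that subsequence converging in $Z$ as well, because $Y$ carries the subspace topology.

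I would first treat item (iv) of the corollary, since items (i), (v), (vi) reduce to it. Given $B\in\BNP(E)$, the set $B$ is infinite, so pick any countably infinite $X\subseteq B$; if $S\subseteq E'$ is $E$-norming, then $S$ is $(X-X)$-norming with the same constant (because $X-X\subseteq E$), and $E'_{w^\ast}$ is selectively sequentially pseudocompact at $S$ by hypothesis. Thus condition (iv) of Theorem~\ref{t:Banach-eJNP-1} holds, and $E$ is Gelfand--Phillips. Next, (i)$\Rightarrow$(iv): the ball $B_{E'}$ is $E$-norming with constant $\lambda=1$ by the Hahn--Banach theorem, and the transfer principle upgrades selective sequential pseudocompactness of the space $(B_{E'},w^\ast)$ to selective sequential pseudocompactness of $E'_{w^\ast}$ at $B_{E'}$. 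Then (ii)$\Rightarrow$(i): for separable $E$ the ball $(B_{E'},w^\ast)$ is compact (Alaoglu) and metrizable, hence sequentially compact, hence trivially selectively sequentially pseudocompact. For (v)$\Rightarrow$(vi): identify $K$ with the closed subspace $\{\delta_x:x\in K\}$ of $P(K)$; by the transfer principle, selective sequential pseudocompactness of $K$ gives that $P(K)$ is selectively sequentially pseudocompact at $A:=\{\delta_x:x\in K\}\supseteq K$. For (vi)$\Rightarrow$(iv) with $E=C(K)$ and $S=A$: since $\{\delta_x:x\in K\}\subseteq A\subseteq P(K)\subseteq B_{C(K)'}$ and $\{\delta_x:x\in K\}$ is $C(K)$-norming, the set $A$ is $C(K)$-norming; and since $P(K)$ is a subspace of $C(K)'_{w^\ast}$, the transfer principle turns selective sequential pseudocompactness of $P(K)$ at $A$ into the same property for $C(K)'_{w^\ast}$ at $A$.

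For item (iii) I would instead use condition (iii) of Theorem~\ref{t:Banach-eJNP-1}. Let $B\in\BNP(E)$. Being bounded and non-precompact, $B$ is not totally bounded, so (exactly as in the proof of (iv)$\Rightarrow$(i)) it contains a countable non-precompact subset $B_0=\{b_n:n\in\w\}$; it suffices to produce a weak$^\ast$ null sequence $(\chi_n)_{n\in\w}$ in $E'$ with $\|\chi_n\|_{B_0}\not\to0$, since then $\|\chi_n\|_B\ge\|\chi_n\|_{B_0}\not\to0$. Let $E_0:=\overline{\spn}\,B_0$, a separable closed subspace of $E$; the closure of $B_0$ in $E_0$ coincides with its closure in $E$, so $B_0\in\BNP(E_0)$. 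By Gelfand's theorem (item (ii)), $E_0$ is Gelfand--Phillips, so Theorem~\ref{t:Banach-eJNP-1}\,((i)$\Rightarrow$(iii)) provides a weak$^\ast$ null sequence $(\psi_n)_{n\in\w}$ in $E_0'$ with $\|\psi_n\|_{B_0}\not\to0$; passing to a subsequence we may assume $\|\psi_n\|_{B_0}\ge\delta$ for some $\delta>0$ and all $n$. By separable weak$^\ast$-extensibility of $E$, a subsequence $(\psi_{n_k})_{k\in\w}$ extends to a weak$^\ast$ null sequence $(\chi_k)_{k\in\w}$ in $E'$ with $\chi_k{\restriction}_{E_0}=\psi_{n_k}$; as $B_0\subseteq E_0$, this gives $\|\chi_k\|_{B_0}=\|\psi_{n_k}\|_{B_0}\ge\delta$, hence $\|\chi_k\|_{B_0}\not\to0$, and Theorem~\ref{t:Banach-eJNP-1}\,((iii)$\Rightarrow$(i)) applies.

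Essentially everything is bookkeeping on top of Theorem~\ref{t:Banach-eJNP-1} and the transfer principle; the only point needing care is the two successive passages to subsequences in item (iii) — first to make $\|\psi_n\|_{B_0}$ bounded away from $0$, then through the separable weak$^\ast$-extension property — together with the preliminary reduction to a countable non-precompact subset so that the separability hypothesis becomes available.
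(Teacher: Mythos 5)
Your proposal is correct, and apart from item (iii) it follows the paper's route exactly: everything is funnelled through condition (iv) of Theorem~\ref{t:Banach-eJNP-1}, using that an $E$-norming set is automatically $(X-X)$-norming for any $X\subseteq B$ and that selective sequential pseudocompactness of a subspace at a set transfers to the ambient space at that set (the paper uses this transfer silently; making it explicit, as you do, is a small improvement, since the hypothesis in (i) is about the space $(B_{E'},w^\ast)$ while Theorem~\ref{t:Banach-eJNP-1}(iv) is about $E'_{w^\ast}$ being selectively sequentially pseudocompact \emph{at} a set). The one genuine divergence is item (iii): the paper verifies condition (ii) of Theorem~\ref{t:Banach-eJNP-1} by producing an operator $T:X\to c_0$ on a separable subspace with $T(X\cap B)$ non-precompact and extending it via the separable $c_0$-extension property, whereas you verify condition (iii) by producing a weak$^\ast$ null sequence on $E_0'$ with $\|\psi_n\|_{B_0}\not\to 0$ and extending a subsequence via separable weak$^\ast$-extensibility. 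These are dual formulations of the same step (an operator into $c_0$ is a weak$^\ast$ null sequence of functionals, which is precisely the content of (ii)$\Leftrightarrow$(iii) in the theorem), and each relies on one of the two equivalent descriptions of the hypothesis stated in the corollary; your version has the minor virtue of not needing the identification with the $c_0$-extension property, at the cost of the two subsequence extractions you correctly flag. All the supporting details you supply --- $B$ infinite, $B_0\in\BNP(E_0)$ because $E_0$ is closed in $E$, $\|\chi_k\|_{B_0}=\|\psi_{n_k}\|_{B_0}$ since $B_0\subseteq E_0$, and $A$ being $C(K)$-norming with constant $1$ because $\{\delta_x:x\in K\}\subseteq A\subseteq P(K)\subseteq B_{C(K)'}$ --- check out.
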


\begin{proof}
(i) If $B_{E'}$ is selectively sequentially pseudocompact in the weak$^\ast$ topology, then by the equivalence  (i)$\Leftrightarrow$(iv) in Theorem~\ref{t:Banach-eJNP-1}, $E$ is Gelfand--Phillips because $B_{E'}$ is $E$-norming.
\smallskip

(ii) If $E$ is separable, then $B_{E'}$ is compact metrizable and hence selectively sequentially pseudocompact in the weak$^\ast$ topology. By (i), $E$ is Gelfand--Phillips.
\smallskip

(iii) Assume that $E$ has the separable $c_0$-extension property. In order to apply Theorem~\ref{t:Banach-eJNP-1}, it suffices for every $B\in \BNP(E)$ to find an operator $T:E\to c_0$ such that $T(B)$ is not precompact in $c_0$. Given any $B\in\BNP(E)$, find a separable Banach subspace $X\subseteq E$ such that the set $X\cap B$ is not precompact in $X$. By (ii), the separable Banach space $X$ is Gelfand--Phillips. By Theorem~\ref{t:Banach-eJNP-1}, there exists an operation $T:X\to c_0$ such that the set $T(X\cap B)$ is not precompact in $c_0$. Since $E$ has the separable $c_0$-extension property, the operator $T$ can be extended to an operator $\bar T:E\to c_0$. It is clear that the set $\bar T(B)\supseteq \bar T(X\cap B)$ is not precompact in $c_0$.
\smallskip

(iv) Assume that the space $E'_{w^\ast}$ is selectively sequentially pseudocompact at some $E$-norming set $S\subseteq E'$. By (iv) of Theorem~\ref{t:Banach-eJNP-1}, the Banach space $E$ is Gelfand--Phillips.
\smallskip

(v) Assume that $E=C(K)$ for some compact selectively sequentially pseudocompact space $K$. Identify $K$ with the set of Dirac measures in $E'_{w^\ast}$. Then $K$ is an $E$-norming set and $E'_{w^\ast}$ is  selectively sequentially pseudocompact at $K$. By  (iv) of Theorem~\ref{t:Banach-eJNP-1}, the space $E$ is Gelfand--Phillips.
\smallskip

(vi) Assume that $E=C(K)$ for some compact space $K$ such that $P(K)$ is  selectively sequentially pseudocompact at some set $A\subseteq P(K)$ containing $K$. Identify $P(K)$ with the subspace of positive  functionals in $E'_{w^\ast}$ and observe that $E'_{w^\ast}$ is selectively sequentially pseudocompact at $A$ and $A\supseteq K$ is $E$-norming. By Theorem~\ref{t:Banach-eJNP-1} the space $E$ is Gelfand--Phillips.
\end{proof}

Sinha and Arora \cite[Corollary~2.4]{SinhaArora} showed that for any Valdivia compact space, the Banach spave $C(K)$ is Gelfand--Phillips. As we mentioned above, every Valdivia compact is selectively sequentially pseudocompact, and hence their result follows from  Corollary \ref{c:Banach-GP}(v). %It should be noticed also that (v) of  Corollary \ref{c:Banach-GP} generalizes Theorem 2.2 of \cite{Drewnowski} and Proposition 2 of \cite{Schlumprecht-C}.

It was noticed in \cite{CGP} that the condition to have the separable $c_0$-extension property in (iii) of Corollary \ref{c:Banach-GP} is only sufficient to have the Gelfand--Phillips property. Below we give a concrete example. Let us recall that the {\em split interval\/} $\ddot\II$ is the space $[0,1]\times\{0,1\}$ endowed with the interval topology generated by the lexicographic order $\le$ defined by $(x,i)\le (y,j)$ if and only if either $x<y$ or $x=y$ and $i\le j$. It is well known that the split interval is first-countable and separable but not metrizable.

\begin{example}\label{exa:split-eJNP}
For the split interval $\ddot\II$, the Banach space $C(\ddot\II)$ is Gelfand--Phillips but fails to have the separable $c_0$-extension property.
\end{example}

\begin{proof}
Being compact and first-countable, the split interval $\ddot\II$ is sequentially compact and hence selectively sequentially pseudocompact. By Corollary \ref{c:Banach-GP}(vi), the Banach space $C(\ddot\II)$ is Gelfand--Phillips.  Since the space $\ddot\II$ is linearly ordered, separable and non-metrizable, we can apply Theorem 2.2 of \cite{CT14} and conclude that the Banach space $C(\ddot\II)$ does not have the separable $c_0$-extension property.
\end{proof}

By the Phillips result \cite{Phillips}, the Banach space $C(\beta\w)$ is not Gelfand--Phillips. Below we generalize this result.
Recall that a Tychonoff space $X$ is  an {\em $F$-space} if every functionally open set $A$ in $X$ is {\em $C^\ast$-embedded} in the sense that every bounded continuous function $f:A\to \IR$ has a continuous extension $\bar f:X\to\IR$. For  numerous equivalent conditions for a Tychonoff space $X$ to be an $F$-space, see \cite[14.25]{GiJ}. In particular, the Stone--\v{C}ech compactification $\beta \Gamma$ of any discrete space $\Gamma$ is a compact $F$-space.

\begin{example}\label{exa:F-space-eJNP}
For any infinite compact $F$-space $K$, the Banach space $C(K)$ is not Gelfand--Phillips. %In particular, the Banach space $\ell_\infty=C(\beta\w)$ does not have the $(GP)$ property.
\end{example}

\begin{proof} 
Being infinite, the Tychonoff space $K$ contains a sequence $\{V_n\}_{n\in\w}$ of nonempty pairwise disjoint open sets. For every $n\in\w$, fix a point $v_n\in V_n$ and a continuous function $f_n:K\to [0,1]$ such that $f_n(v_n)=1$ and $f_n(K\SM V_n)=\{0\}$. Consider the operator $T:c_0\to C(K)$ assigning to each sequence $x=(x_n)_{n\in\w}\in c_0$ the continuous function $T(x)=\sum_{n\in\w}x_n\cdot f_n$, and observe that $T$ is an isometric embedding of $c_0$ into $C(K)$. By Corollary 4.5.9 of \cite{Dales-Lau}, the Banach space $C(K)$ has the Grothendieck property, which means that the identity map $C(K)'_{w^\ast}\to C(K)'_w$ is sequentially continuous (where $C(K)'_w$ denotes the dual space of $C(K)$  endowed with the weak topology).

Since the operator $T:c_0\to C(K)$ is an embedding, the image $B:=T(B_{c_0})$ is bounded and not precompact in $C(K)$, i.e., $B\in\BNP\big(C(K)\big)$. Assuming that $C(K)$ is Gelfand--Phillips, we can find a weak$^\ast$ null sequence $S=\{\mu_n\}$ in $E'$ such that $\|\mu_n\|_{B}\not\to 0$. Since the identity map  $C(K)'_{w^\ast}\to C(K)'_w$ is sequentially continuous, we obtain that the sequence $S$ converges to zero in the weak topology of the dual Banach space $C(K)'$. Then, for the adjoint operator $T^\ast:C(K)'_w\to (c'_0)_w=(\ell_1)_w$, the sequence $\{T^\ast(\mu_n)\}_{n\in\w}$ converges to zero in the weak topology of the Banach space $\ell_1$. By the Schur Theorem \cite[VII]{Diestel}, this sequence converges to zero in norm. For every $n\in\w$ and $x\in B_{c_0}$, we have
\[
\|\mu_n\|_{B}=\sup_{x\in B_{c_0}}\big|\mu_n\big(T(x)\big)\big|=\sup_{x\in B_{c_0}}|T^\ast(\mu_n)(x)|=\|T^\ast(\mu_n)\|\to 0,
\]
which contradicts the choice of the sequence $(\mu_n)_{n\in\w}$. Thus the Banach space $C(K)$ is not Gelfand--Phillips.
\end{proof}

It is known that Gelfand--Phillips spaces are not preserved by taking quotients, see \cite{Schlumprecht-Ph}. Below we present a simple example witnessing this fact.

\begin{example}\label{exa:XY-eJNP}
There are compact Hausdorff spaces $X\subseteq Y$ such that the Banach space $C(Y)$ is Gelfand--Phillips but the Banach space $C(X)$ is not Gelfand--Phillips. In particular, a quotient of a Gelfand--Phillips Banach space can fail to be Gelfand--Phillips.
\end{example}

\begin{proof}
In the Cantor cube $Y:=\{0,1\}^{\mathfrak{c}}$ of weight $\mathfrak{c}$ take a subspace $X$, homeomorphic to $\beta\w$. Being Valdivia compact, the Cantor cube $\{0,1\}^{\mathfrak{c}}$ is selectively sequentially pseudocompact and, by  Corollary \ref{c:Banach-GP}(v), the Banach $C(Y)$ is Gelfand--Phillips. By Example~\ref{exa:F-space-eJNP}, the Banach space $C(X)$ is not Gelfand--Phillips.
\end{proof}

It is worth mentioning that, by results of Schlumprecht \cite{Schlumprecht-Ph,Schlumprecht-C},  the Gelfand--Phillips property is not a three space property (see also Theorem 6.8.h in \cite{CG}).
\smallskip

We finish this section with  two questions. The first one is related to a known open problem of characterizing Banach spaces $E$ for which the dual unit ball $B_{E'}$ is weak$^\ast$ sequentially compact  (for historical remarks and the latest results,  see \cite{MaCe}).

\begin{problem} \label{prob:Banach-near}
Characterize Banach spaces $E$ for which the dual unit ball $B_{E'}$ is weak$^\ast$ selectively sequentially pseudocompact.
\end{problem}

%As we mentioned above the problem to studying topological properties of spaces of  the compact space $P(X)$ of probability Radon measures on $X$ and its subset $P_\w(X)$  is of considerable interest.

The following problem is motivated by the conditions (v) and (vi) of  Corollary \ref{c:Banach-GP}.

\begin{problem} \label{prob:Banach-Efimov}
Is there an infinite compact space $K$ whose space of probability measures $P(K)$ is selectively sequentially pseudocompact but $K$ contains no non-trivial convergent sequences?
\end{problem}

\begin{remark} By \cite{BG}, under Jensen's Diamond Principle $\diamondsuit$ (which is stronger than the Continuum Hypothesis),  there exists an infinite compact space $K$ such that the compact space $P(K)$  is selectively sequentially pseudocompact  but $K$ contains no topological copies of the spaces $\beta\w$ and $\w+1=\w\cup\{\w\}$. By Corollary \ref{c:Banach-GP}(vi), the Banach space $C(K)$  is Gelfand--Phillips, yet $K$ contains no nontrivial convergent sequences (a CH-example of a compact space with these two properties has been constructed by Schlumprecht in \cite[\S~5.4]{Schlumprecht-Ph}). The space $K$ shows that Problem~\ref{prob:Banach-Efimov} has an affirmative answer under $\diamondsuit$. So, this problem essentially asks about the existence of a ZFC-example. It should be mentioned that infinite compact spaces containing no topological copies of the spaces $\beta\w$ and $\w+1$ are called {\em Efimov}. The problem of the existence of Efimov compact spaces in ZFC is one of major unsolved problems of Set-Theoretic Topology, see \cite{Nyikos}, \cite{Hart}.\qed
\end{remark}

%%%%%%%%%%%%%%%%%%%%%%%%%%%%%%%%%%%%%%%%%
%%%%%%%%%%%%%%%%%%%%%%%%%%%%%%%%%%%%%%%%%
%%%%%%%%%%%%%%%%%%%%%%%%%%%%%%%%%%%%%%%%%
%%%%%%%%%%%%%%%%%%%%%%%%%%%%%%%%%%%%%%%%%
%%%%%%%%%%%%%%%%%%%%%%%%%%%%%%%%%%%%%%%%%

\section{Banach spaces with the strong Gelfand--Phillips property} \label{sec:strong-GP}

%%%%%%%%%%%%%%%%%%%%%%%%%%%%%%%%%%%%%%%%%
%%%%%%%%%%%%%%%%%%%%%%%%%%%%%%%%%%%%%%%%%
%%%%%%%%%%%%%%%%%%%%%%%%%%%%%%%%%%%%%%%%%
%%%%%%%%%%%%%%%%%%%%%%%%%%%%%%%%%%%%%%%%%
%%%%%%%%%%%%%%%%%%%%%%%%%%%%%%%%%%%%%%%%%

%{t:Banach-strong-GP}{t:Banach-strong-GP-C}{t:Banach-C(K)-strong-GP}

Below we characterize Banach spaces with the strong Gelfand--Phillips property.

\begin{theorem} \label{t:Banach-strong-GP}
A Banach space $E$ is strongly Gelfand--Phillips if and only if it embeds into $c_0$.
\end{theorem}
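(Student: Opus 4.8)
The plan is to identify the sequence witnessing the strong Gelfand--Phillips property with (essentially) the sequence of coordinate functionals of an embedding of $E$ into $c_0$. Both implications are proved directly.

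For the implication ``$E$ embeds into $c_0$ $\Rightarrow$ $E$ is strongly Gelfand--Phillips'', I would fix an isomorphic embedding $J\colon E\to c_0$ with $a\|x\|\le\|Jx\|\le b\|x\|$ for all $x\in E$, and set $\chi_n:=e'_n\circ J\in E'$, where $e'_n$ is the $n$-th coordinate functional of $c_0$. Since $Jx\in c_0$ for each $x$, the sequence $(\chi_n)_{n\in\w}$ is weak$^*$ null, and $\|\chi_n\|\le b$. Given $B\in\BNP(E)$, argue by contradiction: if $\|\chi_n\|_B\to 0$, then $\sup_{y\in J(B)}|e'_n(y)|=\|\chi_n\|_B\to 0$ while $J(B)$ is bounded, so by the standard description of precompact subsets of $c_0$ (bounded, with uniformly vanishing tails) the set $J(B)$ is precompact, hence totally bounded; applying the Lipschitz map $J^{-1}\colon J(E)\to E$ shows that $B$ is totally bounded, hence precompact, contradicting $B\in\BNP(E)$. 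Thus $(\chi_n)_{n\in\w}$ witnesses the strong Gelfand--Phillips property.

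For the converse, let $(\chi_n)_{n\in\w}$ be a weak$^*$ null sequence in $E'$ with $\|\chi_n\|_B\not\to 0$ for every $B\in\BNP(E)$. By the Banach--Steinhaus Uniform Boundedness Principle, $(\chi_n)$ is norm bounded, so $T\colon E\to c_0$, $Tx:=(\chi_n(x))_{n\in\w}$, is a well-defined bounded operator. First I would observe that $\ker T$ is finite-dimensional: otherwise $B_{\ker T}\in\BNP(E)$ while $\|\chi_n\|_{B_{\ker T}}=0$ for all $n$, a contradiction. Choose a closed complement $G$ of the finite-dimensional subspace $\ker T$, so $E=\ker T\oplus G$. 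The crucial step is to show that $T|_G$ is bounded below. Suppose not; then there are $y_k\in G$ with $\|y_k\|=1$ and $\|Ty_k\|\to 0$, and I put $X:=\{y_k:k\in\w\}$. Since $\|Ty_k\|=\sup_n|\chi_n(y_k)|\to 0$ (so all but finitely many $y_k$ satisfy $|\chi_n(y_k)|<\e$ for all $n$), and for each of the remaining finitely many $k$ one has $\chi_n(y_k)\to 0$ as $n\to\infty$ by weak$^*$ nullity, it follows that $\|\chi_n\|_X=\sup_k|\chi_n(y_k)|\to 0$. Hence $X\notin\BNP(E)$, i.e.\ $X$ is precompact, so a subsequence $y_{k_j}$ converges in $G$ to a vector $y$ with $\|y\|=1$ and $Ty=0$; this contradicts the injectivity of $T|_G$. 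Therefore $T|_G\colon G\to c_0$ is an isomorphic embedding.

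To finish, note that $\ker T$ is finite-dimensional and that every finite-dimensional Banach space embeds (isometrically) into $c_0$; combining such an embedding of $\ker T$ with the embedding $T|_G\colon G\hookrightarrow c_0$ and the isomorphism $c_0\oplus_\infty c_0\cong c_0$ yields an isomorphic embedding $E\cong\ker T\oplus G\hookrightarrow c_0$. I expect the main obstacle to be the converse direction, and within it the passage from the abstract witnessing sequence to a genuine isomorphic embedding: one has to test the strong Gelfand--Phillips property against carefully chosen non-precompact sets of the form $\{y_k\}$ to get the ``bounded below'' estimate, and separately dispose of the (possibly non-trivial, but necessarily finite-dimensional) kernel of $T$. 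Everything else is routine.
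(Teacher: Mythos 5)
Your argument is correct and essentially the paper's: the paper handles the finite-dimensional common kernel by adjoining finitely many separating functionals to the witnessing weak$^*$ null sequence (which changes neither its weak$^*$ nullity nor the property $\|\chi_n\|_B\not\to0$, since both depend only on tails) rather than by complementing the kernel, but both directions otherwise run exactly as you describe, including the test set $\{y_k\}$ for the bounded-below estimate and the $c_0$-compactness criterion for the converse. One small correction: a finite-dimensional space need not embed \emph{isometrically} into $c_0$ (two-dimensional Euclidean space does not, since the coordinate functionals of an isometric embedding must tend to zero in norm, so only finitely many can have norm one, yet each point of the unit sphere would need one of them to attain the value one there); it does, however, embed isomorphically, e.g.\ via $\ell_\infty^n$, which is all your argument requires.
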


\begin{proof} Assuming that $E$ is strongly Gelfand--Phillips, find a weak$^\ast$ null sequence $(\chi_n)_{n\in\w}$ in $E'$ such that $\|\chi_n\|_B\not\to 0$ for any $B\in\BNP(E)$. Therefore every bounded subset of the Banach subspace $Z=\bigcap_{n\in\w}\chi_n^{-1}(0)$ of $E$ is precompact, which implies that the subspace $Z$ is finite-dimensional. Unifying the weak$^\ast$ null sequence $\{\chi_n\}_{n\in\w}$ with a finite set of functionals separating points of the finite-dimensional space $Z$, we can assume that $Z=\{0\}$. In this case the
 linear map
\[
T:E\to c_0, \quad T:x\mapsto\big(\chi_n(x)\big)_{n\in\w},
\]
is injective. By  the Banach--Steinhaus Uniform Boundedness Principle, the map $T$ is continuous. Assuming that the operator $T$ is not a topological embedding, we can find a sequence $\{x_n\}_{n\in\w}\subseteq E$ of elements of norm $1$ such that $T(x_n)\to 0$.

We claim that the bounded set $B=\{x_n\}_{n\in\w}$ is not precompact in $E$. Indeed, in the opposite case, by the completeness of $E$, the sequence $\{x_n\}_{n\in\w}$ would contain a subsequence $\{x_{n_k}\}_{k\in\w}$ that converges in $E$ to some element $x_\infty\in E$ of norm $\|x_\infty\|=1$. The continuity of the operator $T$ ensures that $T(x_\infty)=\lim_{n\to\infty}T(x_n)=0$, which contradicts the injectvity of $T$. This contradiction shows that the set  $B$ is not precompact in $E$.

Now the choice of the sequence $(\chi_n)_{n\in\w}$ ensures that the sequence $(\|\chi_n\|_B)_{n\in\w}$ does not converge to zero. Since $\lim_{n\to\infty}\|T(x_n)\|=0$,  for every $\e>0$, we can find an $n\in\w$ such that $\|T(x_i)\|=\sup_{k\in\w}|\chi_k(x_i)|<\e$ for all $i\ge n$. Since the sequence $(\chi_{n})_{n\in\w}$ weak$^\ast$ null, there exists a natural number $m\ge n$ such $|\chi_k(x_i)|<\e$ for all $i\leq n$ and $k\geq m$. Then for every $k\geq m$, we have
\[
\|\chi_k\|_B=\sup_{i\in\w}|\chi_k(x_i)|=\max\big\{\max_{i\leq n}|\chi_k(x_i)|,\sup_{i>n}|\chi_k(x_i)|\big\}<\e,
\]
which means that $\|\chi_k\|_B\to 0$. This contradiction shows that the operator $T:E\to c_0$ is a topological embedding.
\smallskip

Conversely, assume now that $E$ is a subspace of the Banach space $c_0$. % Let  $B_E$ be the closed unit ball of the subspace $E$ of $c_0$.
For every $n\in\w$, let $\chi_n=e_n'{\restriction}_E$ be the restriction of the coordinate functional $e_n'\in c_0' =\ell_1$ to the subspace $E\subseteq c_0$. Clearly, $\{\chi_n\}_{n\in\w}$ is weak$^\ast$-null in $E'$. Repeating the argument of the proof of the implication (ii)$\Rightarrow$(iii) in Theorem~\ref{t:Banach-eJNP-1}, we can show that every bounded set $B\subseteq E\subseteq c_0$ with $\|\chi_n\|_B\to 0$ is precompact. Thus the sequence $(\chi_n)_{n\in\w}$ witnesses that $E$ has the  strong Gelfand--Phillips property.
\end{proof}

\begin{remark}
It is well known \textup{(}see, e.g. \cite[2.d.6]{LT}\textup{)} that the Banach space $c_0$ contains closed infinite-dimensional subspaces which are not isomorphic to $c_0$.\qed
\end{remark}

As a corollary we obtain the following three space property for the class of strongly Gelfand--Phillips spaces.

\begin{corollary}\label{c:Banach-uJNP-quotient}
Let $E$ be a Banach space and $H\subseteq E$ be a closed linear subspace. Then the Banach space $E$ is strongly Gelfand--Phillips if and only if the Banach spaces $H$ and $E/H$ are strongly Gelfand--Phillips.
\end{corollary}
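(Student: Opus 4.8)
The plan is to reduce everything to Theorem~\ref{t:Banach-strong-GP}: since a Banach space is strongly Gelfand--Phillips exactly when it embeds into $c_0$, the corollary is equivalent to the assertion that $E$ embeds into $c_0$ if and only if both $H$ and $E/H$ do --- that is, that the class of subspaces of $c_0$ is a three-space class and is stable under passing to closed subspaces and to quotients. I would prove the two implications separately.

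For the ``only if'' part, assume $E$ embeds into $c_0$. Then its closed subspace $H$ is (isomorphic to) a closed subspace of $c_0$, so $H$ embeds into $c_0$; nothing more is needed here. For $E/H$ I would first reduce to the special case $E=c_0$: identifying $E$ with a closed subspace of $c_0$, the canonical map $E/H\to c_0/H$ is an isometric embedding, because for $x\in E$ the distance from $x$ to $H$ is the same computed in $E$ or in $c_0$ (the norm of $E$ is the restriction of the $c_0$-norm and $H\subseteq E$). So it suffices to know that every quotient of $c_0$ embeds into $c_0$. I expect \emph{this} to be the main obstacle: it is a genuine structural theorem about $c_0$ (a classical result of Johnson and Zippin), not something squeezable out of the machinery developed earlier in the paper. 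The naive approach --- take the restrictions to $E$ of the coordinate functionals of $c_0$, a bounded weak$^*$ null sequence norming $E$, and correct each of them by a Hahn--Banach (or Sobczyk) extension of its restriction to $H$ so that the difference lies in $H^\perp=(E/H)'$ --- does not work, because such a correction can annihilate the functional on vectors lying near $H$ and thereby destroy the property of being norming for $E/H$.

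For the ``if'' part, assume $H$ and $E/H$ both embed into $c_0$. Then both are separable, and hence so is $E$ (adjoin to a countable dense subset of $H$ the lifts of a countable dense subset of $E/H$). Fix isomorphic embeddings $j\colon H\to c_0$ and $i\colon E/H\to c_0$, and let $q\colon E\to E/H$ be the quotient map. By Sobczyk's theorem (separability of $E$ is essential here), $j$ extends to an operator $J\colon E\to c_0$. I would then show that
\[
T\colon E\to c_0\oplus_\infty c_0,\qquad Tx=\big(Jx,\ i(qx)\big),
\]
is an isomorphic embedding. It is plainly bounded, and a routine triangle-inequality estimate --- given $x\in E$, choose $h\in H$ nearly realizing $\|qx\|=\mathrm{dist}(x,H)$, use $Jh=jh$ to bound $\|h\|$ in terms of $\|Jx\|$ and $\|qx\|$, then bound $\|x\|\le\|h\|+\|x-h\|$ and $\|qx\|\le\|i^{-1}\|\,\|i(qx)\|$ --- shows that $\|x\|\le C\,\|Tx\|$ for a constant $C$ independent of $x$. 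Since $c_0\oplus_\infty c_0\cong c_0$, this gives an embedding of $E$ into $c_0$ and finishes the proof.
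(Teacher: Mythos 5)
Your proof is correct and follows essentially the same route as the paper's: both directions reduce to Theorem~\ref{t:Banach-strong-GP}, the ``only if'' part rests on the Johnson--Zippin theorem that quotients of $c_0$ embed into $c_0$ (applied after identifying $E/H$ with a subspace of $c_0/H$), and the converse uses Sobczyk's theorem to extend the embedding of $H$ and then the map $x\mapsto\big(Jx,\,i(qx)\big)$ into $c_0\times c_0\cong c_0$. The only difference is that you spell out the lower estimate showing $T$ is an isomorphic embedding, a step the paper dismisses with ``it can be shown''.
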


%{t:Banach-strong-GP}{t:Banach-strong-GP-C}{t:Banach-C(K)-strong-GP}
\begin{proof}
If $E$ is strongly Gelfand--Phillips, then by Theorem~\ref{t:Banach-strong-GP}, $E$ can be identified with a subspace of $c_0$. By Theorem  \ref{t:Banach-strong-GP}, the Banach space $H\subseteq E\subseteq c_0$ is strongly Gelfand--Phillips. By a result of Johnson and Zippin \cite{JZ}, the quotient space $c_0/H$ is isomorphic to a subspace of $c_0$, and so is the quotient space $E/H\subseteq c_0/H$. By Theorem~\ref{t:Banach-strong-GP}, the quotient space $E/H$ is strongly Gelfand--Phillips.
\vskip3pt

Now assume that the Banach spaces $H$ and $E/H$ are strongly Gelfand--Phillips.
By Theorem~\ref{t:Banach-strong-GP}, these  spaces are isomorphic to subspaces of $c_0$. Consequently, there are isomorphic embeddings $f_1:H\to c_0$ and $f_2:E/H\to c_0$. Being isomorphic to subspaces of $c_0$, the Banach spaces $H$ and $E/H$ are separable and so is the Banach space $E$.  By (an implication of) the Sobczyk Theorem \cite[p.72]{Diestel}, the linear embedding $f_1:H\to c_0$ extends to an operator $\bar f_1:E\to c_0$. Let $q:E\to E/H$ be the quotient operator. It can be shown that  the operator $f:E\to c_0\times c_0$, $f:x\mapsto (\bar f_1(x),f_2\circ q(x))$, is an isomorphic embedding of $E$ into the Banach space $c_0\times c_0$. By Theorem~\ref{t:Banach-strong-GP}, the Banach space $E$ has is strongly Gelfand--Phillips.
\end{proof}

Let us recall that a Tychonoff space $K$ is {\em pseudocompact} if each real-valued continuous function on $K$ is bounded. Observe that for a pseudocompact space $K$, the space $C(K)$ of $\IF$-valued continuous functions on $K$ is Banach with respect to the norm $\|f\|:=\sup_{x\in K}|f(x)|$.

A topological space $X$ is {\em scattered} if each nonempty subspace of $X$ has an isolated point. For a topological space $X$, let $X^{(0)}:=X$ and let $X^{(1)}$ be the space of non-isolated points of $X$. For a non-zero ordinal $\alpha$, let $X^{(\alpha)}:=\bigcap_{\beta<\alpha}(X^{(\beta)})^{(1)}$. It is well known that a topological space $X$ is scattered if and only if $X^{(\alpha)}=\emptyset$ for some ordinal $\alpha$. The smallest ordinal $\alpha$ with $X^{(\alpha)}=\emptyset$ is called the {\em scattered height} of $X$.

By a classical result of Bessaga and Pe\l czy\'nski \cite{BP60} (see also \cite[2.14]{Ros}), {\em for a compact  space $K$, the Banach space $C(K)$ is isomorphic to $c_0$ if and only if $K$ is countable and has finite scattered height}. Using this result we prove the second main result of this section.

\begin{theorem}\label{t:Banach-C(K)-strong-GP}
For an infinite pseudocompact space $K$ the following conditions are equivalent:
\begin{enumerate}
\item[{\rm(i)}] the Banach space $C(K)$ is strongly Gelfand--Phillips;
\item[{\rm(ii)}] the Banach space $C(K)$ is isomorphic to a subspace of $c_0$;
\item[{\rm(iii)}] $K$ is compact and  countable, and the Banach space $C(K)$ is isomorphic to $c_0$;
\item[{\rm(iv)}] the space $K$ is compact, countable and has finite scattered height.
\end{enumerate}
\end{theorem}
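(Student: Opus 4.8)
The plan is to dispatch two of the equivalences almost for free and to concentrate everything on the implication (ii)$\Rightarrow$(iv). Indeed, (i)$\Leftrightarrow$(ii) is \emph{literally} Theorem~\ref{t:Banach-strong-GP} applied to $E=C(K)$. For (iii)$\Leftrightarrow$(iv), observe that both (iii) and (iv) already assume $K$ to be compact and countable, so the Bessaga--Pe\l czy\'nski theorem quoted above says precisely that, under those hypotheses, $C(K)\cong c_0$ iff $K$ has finite scattered height. The implication (iii)$\Rightarrow$(ii) is trivial, since $c_0$ is isometrically a subspace of itself. Thus the whole content of the theorem is the assertion: \emph{if $K$ is pseudocompact and $C(K)$ is isomorphic to a subspace of $c_0$, then $K$ is compact, countable, and of finite scattered height.}

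First I would show that $K$ is compact and metrizable. A subspace of $c_0$ is separable, so $C(K)$ is separable, and hence the weak$^*$-compact ball $(B_{C(K)'},w^*)$ is metrizable. The evaluation map $\delta\colon K\to B_{C(K)'}$, $\delta(x)(f)=f(x)$, is a topological embedding for any Tychonoff $K$ (and it lands in $B_{C(K)'}$ because $\|f\|=\sup_K|f|$), so $K$ embeds into a compact metric space and is therefore metrizable; being also pseudocompact, $K$ is compact. Next I would prove $K$ is countable. Otherwise $K$, being an uncountable compact metric space, contains a Cantor set; composing a continuous surjection of that Cantor set onto $[0,1]$ with a Tietze extension to $K$ (and the retraction of $\IR$ onto $[0,1]$) produces a continuous surjection $\pi\colon K\twoheadrightarrow[0,1]$, so $f\mapsto f\circ\pi$ is an isometric embedding of $C[0,1]$ into $C(K)$. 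Since $C[0,1]$ is isometrically universal for separable Banach spaces, $\ell_2$ embeds into $C(K)$, hence into $c_0$ — contradicting the fact that $c_0$ is $c_0$-saturated (every infinite-dimensional closed subspace of $c_0$ contains a copy of $c_0$) while the reflexive space $\ell_2$ contains no copy of $c_0$. Hence $K$ is countable.

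Finally I would establish that $K$ has finite scattered height. Being countable, compact and Hausdorff, $K$ is scattered and, by the Mazurkiewicz--Sierpi\'nski theorem, homeomorphic to a successor ordinal $[0,\alpha]$ with $\alpha$ countable; elementary ordinal arithmetic shows $K$ has finite scattered height iff $\alpha<\omega^\omega$. Assuming $\alpha\ge\omega^\omega$, the map $\beta\mapsto\min\{\beta,\omega^\omega\}$ is a continuous retraction of $[0,\alpha]$ onto $[0,\omega^\omega]$, so $C([0,\omega^\omega])$ is a complemented subspace of $C(K)$ and therefore embeds into $c_0$; this contradicts the fact that $C([0,\omega^\omega])$ does \emph{not} embed into $c_0$, and we conclude $\alpha<\omega^\omega$, finishing (ii)$\Rightarrow$(iv) and hence the whole cycle.

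\textbf{The hard part} is exactly that last fact, $C([0,\omega^\omega])\not\hookrightarrow c_0$. It is genuinely Banach-space-theoretic and cannot be handled by the soft arguments above: $c_0$ and $C([0,\omega^\omega])$ do embed into each other, so a naive comparison is useless, and one must invoke an isomorphic invariant that strictly increases from $c_0$ to $C([0,\omega^\omega])$. The natural device is the Szlenk index, which is monotone under isomorphic embeddings and satisfies $\Sz(c_0)=\w<\Sz\big(C([0,\w^\w])\big)$; equivalently one appeals to the refinement of the Bessaga--Pe\l czy\'nski classification (in the circle of ideas of \cite{BP60}, see also \cite{Ros}) to the effect that a $C(K)$-space embeds into $c_0$ only when it is already isomorphic to $c_0$. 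I expect writing out, or precisely locating, this ordinal-index step to be the only non-routine ingredient.
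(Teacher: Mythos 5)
Your proposal is correct, and its essential core coincides with the paper's: both arguments ultimately rest on the Szlenk index computation $\Sz(c_0)=\w<\Sz\big(C([0,\w^{\w}])\big)$ (via Theorem 2.15, Proposition 2.27 and Corollary 2.19 of the Rosenthal survey), which you rightly single out as the only non-routine ingredient. The surrounding steps differ in route, though. For countability of $K$, the paper has a two-line argument you might prefer: a subspace of $c_0$ has separable dual (a quotient of $\ell_1$), and the Dirac measures form a $2$-separated subset of $C(K)'$, so $K$ must be countable; this avoids your detour through metrizability of the dual ball, Cantor sets, the universality of $C[0,1]$, and the $c_0$-saturation of $c_0$ (all of which are correct but heavier). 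For compactness the paper then uses countable $\Rightarrow$ Lindel\"of plus pseudocompact $\Rightarrow$ compact, rather than your metrizable-plus-pseudocompact argument. Finally, where you classify $K$ topologically via Mazurkiewicz--Sierpi\'nski as $[0,\alpha]$ and use the retraction $\beta\mapsto\min\{\beta,\w^{\w}\}$ to plant a complemented copy of $C([0,\w^{\w}])$ inside $C(K)$, the paper instead invokes the Bessaga--Pe\l czy\'nski isomorphic classification to write $C(K)\cong C[0,\w^{\w^\alpha}]$ and reads off $\Sz=\w^{\alpha+1}$ directly, concluding $\alpha=0$; this proves (ii)$\Rightarrow$(iii) rather than your (ii)$\Rightarrow$(iv), but the two decompositions close the same cycle. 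Every step you give checks out; the paper's version is simply leaner in the soft parts.
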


\begin{proof}
The equivalence (i)$\Leftrightarrow$(ii) follows from Theorem~\ref{t:Banach-strong-GP}, and the implication  (iii)$\Rightarrow$(ii) is trivial.
\smallskip

(ii)$\Rightarrow$(iii) Assume that the Banach space $C(K)$ is isomorphic to a subspace of $c_0$. Then the Banach space $C(K)$ has separable dual Banach space $C(K)'$. Identifying each point $x\in K$ with the Dirac measure supported at $x$, we see that $\|x-y\|=2$ for any distinct points $x,y\in K\subseteq C(K)'$. Now the separability of the dual Banach space $C(K)'$ implies that the Tychonoff space $K$ is countable. Therefore $K$ is Lindel\"of, and since $K$ is also pseudocompact, it  is compact by Theorems 3.10.21 and 3.10.1 of \cite{Eng}. By the Bessaga--Pe\l czy\'nski Theorem \cite[2.14]{Ros}, the Banach space $C(K)$ is isomorphic to $C[0,\w^{\w^\alpha}]$ for some ordinal $\alpha\geq 0$. Theorem 2.15 of \cite{Ros} implies that the Banach space $C[0,\w^{\w^\alpha}]$ has Szlenk index $\Sz(C[0,\w^{\w^\alpha}])=\w^{\alpha+1}$ and, by Proposition 2.27 in \cite{Ros}, $\Sz(c_0)=\w$. By Corollary 2.19  in \cite{Ros}, the Banach space $C(K)$, being isomorphic to a subspace of $c_0$, has Szlenk index $\Sz(C(K))\leq \Sz(c_0)$.  Then
\[
\w^{\alpha+1}=\Sz(C[0,\w^{\w^\alpha}])=\Sz(C(K))\le \Sz(c_0)=\w
\]
implies that $\alpha=0$. Therefore $C(K)$ is isomorphic to $C[0,\w^{\w^0}]=C[0,\w]$, which is isomorphic to $c_0$.
\smallskip

(iii)$\Rightarrow$(iv) Since  $C(K)$ is isomorphic to $c_0$, the compact countable space $K$ has finite scattered height by Theorem 2 in \cite{BP60} (see also \cite[2.14]{Ros}).
\smallskip

(iv)$\Rightarrow$(iii) If $K$ is compact, countable and has finite scattered height, then the Banach space $C(K)$ is isomorphic to $c_0$ by the Bessaga--Pe\l czy\'nski theorem \cite{BP60}.
\end{proof}

\section{Acknowledgements}

The authors express their sincere thanks to
Tomek Kania for the suggestion %\footnote{{\tt https://mathoverflow.net/a/333129}}
 to apply the Szlenk index in the proof of Theorem~\ref{t:Banach-C(K)-strong-GP} and pointing out %\footnote{{\tt https://mathoverflow.net/a/333386}}
 the papers \cite{CT13,CT14} devoted to the separable $c_0$-extension property.  %and to Grzegorz Plebanek for his suggestion to consider Efimov spaces and some ideas from his joint article with Mirna D\v{z}amonja \cite{DP} to attack Problem \ref{prob:Banach-Efimov}.

%%%%%%%%%%%%%%%%%%%%%%%%%%%%%%%%%%%%%%%%%
%%%%%%%%%%%%%%%%%%%%%%%%%%%%%%%%%%%%%%%%%
%%%%%%%%%%%%%%%%%%%%%%%%%%%%%%%%%%%%%%%%%
%%%%%%%%%%%%%%%%%%%%%%%%%%%%%%%%%%%%%%%%%
%%%%%%%%%%%%%%%%%%%%%%%%%%%%%%%%%%%%%%%%%


\begin{thebibliography}{}


\bibitem{BG}
T. Banakh, S. Gabriyelyan, {\em A simple Efimov space with sequentially-nice space of probability  measures}, preprint.

%\bibitem{BGP}
%T. Banakh, S.S. Gabriyelyan,  I. Protasov, {\em On uniformly discrete subsets in uniform spaces and topological groups}, Mat. Studii \textbf{45} (2016), 76--97.

\bibitem{BP60}
C.~Bessaga, A.~Pe\l czy\'nski, \emph{ Spaces of continuous functions, IV}, Studia Math. \textbf{19} (1960), 53--62.

%\bibitem{Bog}
%V.~Bogachev, \emph{Measure theory, II}, Springer-Verlag, Berlin, 2007.

%\bibitem{BN-P}
%P. Borodulin-Nadzieja, G. Plebanek, \emph{On sequential properties of Banach spaces, spaces of measures and densities}, Czechoslovak Math. J. \textbf{60(135)} (2010), 381-399.

\bibitem{BourDies}
J. Bourgain, J. Diestel, \emph{Limited operators and strict cosingularity}, Math. Nachr. \textbf{119} (1984), 55--58.

\bibitem{CG}
J.M.F. Castillo, M. Gonz\'{a}lez,  \emph{Three-space Problems in Banach Space Theory}, in: Springer Lecture Notes in Math. v. \textbf{1667}, 1997.

\bibitem{CGP}
J.M.F. Castillo, M. Gonz\'{a}lez, P.L. Papini, \emph{On weak$^\ast$-extensible Banach spaces}, Nonlinear Analysis, \textbf{75} (2012), 4936--4941.

\bibitem{CT13}
C.~Correa, D.~Tausk, \emph{ On extensions of $c_0$-valued operators}, J. Math. Anal. Appl. \textbf{405}:2 (2013),  400--408.

\bibitem{CT14}
C.~Correa, D.~Tausk, \emph{Compact lines and the Sobczyk property}, J. Funct. Anal. \textbf{266}:9 (2014),  5765--5778.

\bibitem{Dales-Lau}
H.G. Dales, F.K. Dashiell, Jr., A.T.-M. Lau, D. Strauss, \emph{Banach Spaces of Continuous Functions as Dual Spaces}, Springer, 2016.

\bibitem{Diestel}
J. Diestel, \emph{Sequences and Series in Banach Spaces}, GTM \textbf{92}, Springer, 1984.

\bibitem{DAS1}
A.~Dorantes-Aldama, D.~Shakhmatov, {\em Selective sequential pseudocompactness}, Topology Appl. {\bf 222} (2017), 53--69.

\bibitem{Drewnowski}
L. Drewnowski, \emph{On Banach spaces with the Gelfand--Phillips property}, Math. Z. \textbf{193} (1986), 405--411.

\bibitem{DrewEm}
L. Drewnowski, G. Emmanuele, \emph{On Banach spaces with the Gelfand--Phillips property, II}, Rend. Circ. Mat. Palermo \textbf{38} (1989), 377--391.


%\bibitem{Dow}
%A. Dow, \emph{ Efimov spaces and the splitting number}, Topology Proc. \textbf{29}:1 (2005), 105--113.

%\bibitem{DM}
%A.~Dow, R.~Pichardo-Mendoza, \emph{Efimov spaces, CH, and simple extensions}, Topology Proc. \textbf{33} (2009), 277--283.
	
%\bibitem{DP}
%M.~D\v zamonja, G.~Plebanek, \emph{On Efimov spaces and Radon measures}, Topology Appl. \textbf{154}:10 (2007), 2063--2072.

\bibitem{Eng}
R.~Engelking, \emph{General Topology}, Heldermann Verlag, Berlin, 1989.

%\bibitem{Emmanuele-86}
%G. Emmanuele, {\em A dual characterization of Banach spaces not containing $\ell_1$}, Bull. Polish Acad. Sci. Math. \textbf{34} (1986), 155--160.

\bibitem{fabian-10}
M. Fabian, P. Habala, P. H\'{a}jek, V. Montesinos, J. Pelant, V. Zizler, \emph{Banach space theory. The basis for linear and nonlinear analysis}, Springer, New York, 2010.

%\bibitem{Fedorcuk}
%V.V. Fedor\v{c}uk, \emph{A compact space having the cardinality of the continuum with no convergent sequences}, Math. Proc. Cambridge Phil. Soc. \textbf{81} (1977), 177-181.

%\bibitem{Ga7}
%S.~Gabriyelyan, {\em On characterized subgroups of Abelian topological groups $X$ and the  group of all $X$-valued null sequences}, Comment. Math. Univ. Carol. \textbf{55} (2014), 73--99.\footnote{\color{red}By red color I clolored the references that were not cited in the paper. In theory they should be removed or cited somewhere.}


%\bibitem{Gabr-free-resp}
%S. Gabriyelayn, \emph{Locally convex spaces and Schur type properties},  Ann. Acad. Sci. Fenn. Math. \textbf{44} (2019), 363--378.

\bibitem{Gelfand}
I. Gelfand, \emph{Abstrakte Funktionen und lineare Operatoren}, Mat. Sbornik \textbf{4} (1938), 235--284.

\bibitem{GiJ}
L.~Gillman, M.~Jerison, \emph{Rings of continuous functions}, Van Nostrand, New York, 1960.

\bibitem{Hart}
K.P.~Hart, {\em Efimov's problem}, in: {\em Open Problems in Topology, II} (E.~Pearl, ed.), 171--177, Elsevier, 2007.

%\bibitem{HagJohn}
%J. Hagler,  W.B. Johnson, \emph{On Banach spaces whose dual balls are not weak$^\ast$ sequentially compact}, Israel J. Math. \textbf{28} (1977), 325--330.

%\bibitem{Jar}
%H.~Jarchow, \emph{Locally Convex Spaces}, B.G. Teubner, Stuttgart, 1981.

\bibitem{GhLe}
I. Ghenciu, P. Lewis,  \emph{The Dunford--Pettis property and the Gelfand--Phillips property, and $L$-sets}, Colloq. Math. \textbf{106} (2006), 311--324.

\bibitem{JZ}
W.B.~Johnson, M.~Zippin, \emph{Subspaces and quotient spaces of $(\sum G\sb{n})\sb{l\sb{p}}$ and $(\sum G\sb{n})\sb{c\sb{0}}$}, Israel J. Math. \textbf{17} (1974), 50--55.

%\bibitem{Josefson}
%B. Josefson, \emph{ Weak sequential convergence in the dual of a Banach space does not imply norm convergence}, Arkiv Mat. \textbf{13} (1975), 79--89.

\bibitem{Jos}
B. Josefson, \emph{A Gelfand--Phillips space not containing $\ell_1$ whose dual ball is not weak$^\ast$ sequentially compact}, Glasgow Math. J. \textbf{43} (2001), 125--128.

\bibitem{Kalenda}
O.~Kalenda, \emph{Valdivia compact spaces in topology and Banach space theory}, Extracta Math. \textbf{15}:1 (2000),  1--85.

%\bibitem{Kop}
%S.~Koppelberg, \emph{ Minimally generated Boolean algebras}, Order, \textbf{5}:4 (1989),  393--406.

\bibitem{Leung}
D.H. Leung, \emph{A Gelfand--Phillips property with respect to the weak topology}, Math. Nachr. \textbf{149} (1990), 177--181.

\bibitem{LT}
J.~Lindenstrauss, L.~Tzafriri, \emph{ Classical Banach spaces. I. Sequence spaces}, Springer-Verlag, Berlin-New York, 1977.

\bibitem{MaCe}
G. Mart\'{\i}nez-Cervantes, \emph{ Banach spaces with weak$^\ast$-sequential dual ball}, Proc. Amer. math. Soc. \textbf{146} (2018), 1825--1832.


\bibitem{Nyikos} 
P.J.~Nyikos, {\em Classic problems --- 25 years later. I.} Proceedings of the Spring Topology and Dynamical Systems Conference (Morelia City, 2001). Topology Proc. {\bf 26}:1 (2001/02), 345--356.

%\bibitem{Nissen}
%A. Nissenzweig, \emph{ $W^\ast$ sequential convergence}, Israel J. Math. \textbf{22} (1975), 266--272.

%\bibitem{Od-Ros}
%E. Odell, H.P. Rosenthal, \emph{ A double-dual characterization of separable Banach spaces not containing $\ell_1$}, Israel J. Math. \textbf{20} (1975), 375--384.

\bibitem{Phillips}
R.S. Phillips, \emph{On linear transformations}, Trans. Amer. math. Soc. \textbf{48} (1940), 516--541.

%\bibitem{Pol}
%R.~Pol, \emph{ Note on the spaces of regular probability measures whose topology is determined by countable subsets}, Pacific J. Math. \textbf{100}  (1982), 185--201.

\bibitem{Ros}
H.P.~Rosenthal, \emph{The Banach spaces $C(K)$}, in: {\em Handbook of the geometry of Banach spaces}, Vol. 2, 1547--1602, North-Holland, Amsterdam, 2003.

\bibitem{SaMo}
M. Salimi, S.M. Moshtaghioun, \emph{The Gelfand--Phillips property in closed subspaces of some operator spaces}, Banach J. Math. Anal. \textbf{5} (2011), 84--92.

\bibitem{Schlumprecht-Ph}
T. Schlumprecht, \emph{Limited sets in Banach spaces}, Dissertation, Univ. Munich, 1987.

\bibitem{Schlumprecht-C}
T. Schlumprecht, \emph{Limited sets in $C(K)$-spaces and examples concerning the Gelfand--Phillips property}, Math. Nachr. \textbf{157} (1992), 51--64.

\bibitem{SinhaArora}
D.P. Sinha, K.K. Arora, \emph{On the Gelfand--Phillips property in Banach spaces with PRI}, Collect. Math. \textbf{48}:3 (1997), 347--354.


\end{thebibliography}
\end{document}